\documentclass{article}
\usepackage{appendix}
\usepackage[a4paper]{geometry}
\usepackage[pdftex]{graphicx}
\usepackage[normalem]{ulem}
\usepackage{amssymb}
\usepackage[dvipsnames]{xcolor}
\definecolor{myblue}{RGB}{80,80,160}
\usepackage[utf8]{inputenc}
\usepackage[T1]{fontenc}
\usepackage[english]{babel}
\usepackage{array}
\usepackage{listings}
\usepackage{booktabs}
\usepackage{float}
\usepackage{amsmath}
\usepackage{cases}
\usepackage{mathtools}
\usepackage{mathdots}
\usepackage[overload]{empheq}
\usepackage[hidelinks,colorlinks=true,linkcolor=blue,citecolor=blue]{hyperref}

\makeatletter

\@addtoreset{equation}{section}
\makeatother
\usepackage{enumitem}
\usepackage{mathtools}
\usepackage{tikz}
\usetikzlibrary{shapes,patterns}
\usetikzlibrary{topaths,calc}
\usepackage{amsthm}
\newtheorem{thm}{Theorem}
\newtheorem{corollary}{Corollary}
\newtheorem{conjecture}{Conjecture}
\newtheorem{prop}{Proposition}
\newtheorem{lemme}{Lemma}
\newtheorem{obs}{Observation}
\usepackage[square,numbers]{natbib}
\usepackage{xurl}

\usepackage[mathscr]{euscript}

\title{\textbf{On Hypergraph Colorings and Completely Independent Spanning Trees in Chordal Graphs}}
\author{Mohammed Lalou, Nader Mbarek, Abdallah Skender, Olivier Togni\\
        Université Bourgogne Europe, LIB UR 7534, Dijon}
        
\begin{document}
\maketitle
\begin{abstract}
    In this paper, we study the existence problem of completely independent spanning trees (CIST) in chordal graphs through appropriate hypergraph representations and their panchromatic and bipanchromatic colorings. First, we disprove a conjecture stating an exact relationship between the panchromatic number, the bipanchromatic number, and the minimum number of unique colors in an optimal panchromatic coloring of a hypergraph.
    Then, by relating CIST to panchromatic and bipanchromatic colorings of the associated hypergraphs, we derive structural conditions for their existence in chordal graphs and specifically strictly chordal graphs. 
\end{abstract}

\section{Introduction}

Reliability is a fundamental requirement in the design of interconnected systems, where failures affecting devices or links can disrupt connectivity. Ensuring system robustness involves considering a primary path used during normal operation and an alternative one available when failures occur. This leads to considering several spanning trees that provide alternative ways of connecting the vertices, corresponding to the devices, of a graph representing the system. However, a single vertex or edge failure may affect several spanning trees simultaneously if they overlap. This motivates the study of families of spanning trees with strong independence properties, such as \textit{Completely Independent Spanning Trees} (\textit{CIST}) in which the paths between any pair of vertices are internally vertex-disjoint and edge-disjoint across the trees. In this context, CIST provide a strong structural redundancy and constitute an important graph-theoretic approach to fault tolerance. 

Hasunuma \cite{12} proved that the problem of determining whether a graph has two CIST is NP-complete and conjectured that any $2k$-connected graph contains $k$ CIST. However, this conjecture was disproved in \cite{4}, which showed that, for every $k \geq 2$, there exists a $k$-connected graph that does not admit two CIST. Due to the complexity of the problem, several sufficient conditions for the existence of $k$ CIST in a graph have been established \cite{5,40,22,48,73,38,37}. Further details can be found in \cite{82}. Alongside these general conditions, CIST have also been investigated in some subclasses of chordal graphs, such as split graphs \cite{29,hong_existence_2026,lalou_completely_2026} and $k$-trees  \cite{61}. In particular, Lalou et al. \cite{lalou_completely_2026} established a connection between panchromatic colorings of hypergraphs and the existence of completely independent spanning trees in split graphs. However, these studies are restricted to particular subclasses and do not provide a general framework for chordal graphs.

In this paper, we first disprove a conjecture concerning an exact relationship between the panchromatic number, the bipanchromatic number, and the minimum number of unique colors in an optimal panchromatic coloring. Then, we extend the method of Lalou et al. \cite{lalou_completely_2026} from split graphs to chordal graphs by developing a closely related hypergraph-based correspondence between the existence of CIST and panchromatic and bipanchromatic colorings of associated hypergraphs. 

The rest of this paper is organized as follows. Section \ref{sec2} presents the preliminaries used throughout this paper, namely the panchromatic and bipanchromatic coloring of hypergraphs, as well as the definition of the clique hypergraph corresponding to a chordal graph and its associated separator hypergraph to a chordal graph. Section \ref{sec3} constructs an infinite family of hypergraphs that disproves the conjectured relationship between the panchromatic and bipanchromatic numbers of a hypergraph. Finally, Section \ref{sec4} presents general results on the number of CIST in chordal graphs according to the panchromatic and bipanchromatic colorings of the associated hypergraphs.

\section{Preliminaries} \label{sec2}
In this paper, graphs, representing networked systems, are assumed to be finite, simple, and undirected. For a graph $G$, we denote its vertex and edge sets by $V(G)$ and $E(G)$, respectively. The neighborhood of a vertex $x$, denoted $N_G(x)$, is the set of all vertices adjacent to $x$, and the {\em degree} of $x$ in $G$ is $d_G(x) = |N_G(x)|$. Let $x,y$ be two vertices of $G$, a {\em $(x,y)$-path} is a sequence of distinct vertices starting with $x$ and ending with $y$, where each consecutive pair is an edge of $E(G)$. $G$ is said to be {\em connected} if there exists a path between any pair of its vertices; it is {\em $k$-connected} if it remains connected even after deleting any set of $k-1$ vertices. An {\em acyclic} graph is a graph that contains no cycles. A {\em tree} $T$ is an acyclic connected graph. A {\em spanning tree} $T$ of a graph $G$ is a tree such that $V(T) = V(G)$ and $E(T) \subseteq E(G)$. A vertex $x \in V(T)$ is an {\em internal} vertex if $d_T(x) \geq 2$, otherwise, it is a {\em leaf}. If an internal vertex $x$ of a tree $T$ is adjacent to a leaf $y$ of $T$, then $x$ is said to {\em cover} $y$ in $T$.

Let $G$ be a graph and $P_1,P_2$ two $(x,y)$-paths. $P_1$ and $P_2$ are {\em internally-disjoint} if they do not have an internal vertex in common, and they are {\em openly-disjoint} if they are both edge-disjoint and internally-disjoint. Let $k\geq 2$ be an integer and $T_1, \dots , T_k$ be spanning trees of a graph $G$. If for any pair of vertices $x,y \in V(G)$, the $(x,y)$-paths in $T_1, \dots , T_k$ are pairwise openly-disjoint, then $T_1, \dots , T_k$ are called CIST of $G$. CIST were introduced by Hasunuma \cite{2}, who provided a characterization of them:
\begin{thm}[\cite{2}]
    Let $T_1, \dots, T_k$ be spanning trees in a graph $G$. Then, $T_1, \dots , T_k$ are  completely independent if and only if $T_1, \dots , T_k$ are edge-disjoint and for any vertex $x \in V(G)$ and $1 \leq i \leq k$, there is at most one spanning tree $T_i$ such that $d_{T_i}(x) > 1.$
    \label{thm00}
\end{thm}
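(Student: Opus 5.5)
We prove the two implications separately. The main tool is the following observation. If $T_i$ and $T_j$ are edge-disjoint spanning trees, then for any $x,y$ the $(x,y)$-paths in $T_i$ and $T_j$ are automatically edge-disjoint. So "openly-disjoint" reduces to "internally-disjoint". In the statement, "at most one spanning tree $T_i$ with $d_{T_i}(x)>1$" means: every vertex $x$ is internal in at most one of the trees.

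\emph{Necessity.} Suppose $T_1,\dots,T_k$ are completely independent.

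First we show edge-disjointness. Take an edge $xy$ lying in both $T_i$ and $T_j$ with $i\neq j$. Then the unique $(x,y)$-path in each of the two trees is the single edge $xy$. These two paths share an edge, so they are not openly-disjoint, a contradiction.

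Next we show the degree condition. Suppose $x$ is internal in both $T_i$ and $T_j$. Pick two neighbors $a,b$ of $x$ in $T_i$; the $(a,b)$-path in $T_i$ is then $a,x,b$, with $x$ as an internal vertex. We want $a,b$ lying in different components of $T_j - x$. Then the $(a,b)$-path in $T_j$ would also pass through $x$ as an internal vertex, contradicting internal disjointness.

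Choosing $a,b$ this way takes a little care, since they must be $T_i$-neighbors of $x$ that $T_j$ separates. We argue as follows.
\begin{itemize}
\item Since $d_{T_j}(x)\ge 2$, the forest $T_j - x$ has at least two components. Let $C$ be one of them.
\item Removing $x$ from $T_i$ gives components, each containing exactly one $T_i$-neighbor of $x$.
\item If all $T_i$-neighbors of $x$ lie in the same component of $T_j-x$, we use a different pair instead. Take $u\in C$ and $w$ in another component of $T_j - x$.
\item The $(u,w)$-path in $T_j$ passes through $x$ internally.
\item The $(u,w)$-path in $T_i$ must therefore avoid $x$ as an internal vertex. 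This argument is applied for every such choice of $u$ and $w$.
\end{itemize}

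Hence, for every $u$ in one $T_j$-component and every $w$ in another, $u$ and $w$ lie in the same component of $T_i-x$. This means all vertices other than $x$ lie in a single component of $T_i - x$. But $T_i - x$ has at least two components, because $d_{T_i}(x)\ge2$. This contradiction proves the degree condition.

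\emph{Sufficiency.} Assume the trees are edge-disjoint and each vertex is internal in at most one tree. Fix $x,y$ and $i\neq j$. By the observation, only internal disjointness remains.

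Suppose some vertex $z\neq x,y$ is internal on both the $(x,y)$-path in $T_i$ and the $(x,y)$-path in $T_j$. A vertex that is internal on a path has two path-neighbors, so $d_{T_i}(z)\ge 2$ and $d_{T_j}(z)\ge 2$. This contradicts the hypothesis.

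\emph{Main obstacle.} The main obstacle is the necessity of the degree condition. There, one must choose witness vertices separated by $x$ in both trees. The component-counting argument above, based on the components of $T_j-x$ and $T_i-x$, is what I would write carefully.
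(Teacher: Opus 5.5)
The paper gives no proof of this statement. It is quoted from Hasunuma~\cite{2} and used as a black box, so there is no in-paper argument to compare your route against.

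Judged on its own, your proof is correct.

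\begin{itemize}
\item \textbf{Sufficiency.} This part is right: paths in edge-disjoint trees are edge-disjoint, and a vertex internal to a tree path has degree at least $2$ in that tree.
\item \textbf{Edge-disjointness in the necessity direction.} Also right.
\item \textbf{Degree condition.} The component argument is sound and covers every case by itself. You can therefore drop the opening attempt with two $T_i$-neighbours $a,b$ of $x$, together with the case split on whether they are separated in $T_j-x$.
\end{itemize}

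Two small steps should be written out in the degree-condition argument.

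\begin{enumerate}
\item The passage from ``every pair $u,w$ in distinct components of $T_j-x$ lies in a common component of $T_i-x$'' to ``all of $V(G)\setminus\{x\}$ lies in one component of $T_i-x$'' needs one more line. For $u,v$ in the same component of $T_j-x$, pick $w$ in a different component, which exists since $d_{T_j}(x)\ge 2$. Then use transitivity through $w$.
\item Since $u,w\neq x$, saying the $(u,w)$-path in $T_i$ does not have $x$ as an internal vertex means the path avoids $x$ altogether. That is exactly what places $u$ and $w$ in the same component of $T_i-x$.
\end{enumerate}
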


A {\em chordal graph}  $G = (V,E)$ is a graph in which all cycles of at least four vertices have a \textit{chord}, \textit{i.e.}, an edge joining two nonconsecutive vertices of the cycle. Let $S$ be a subset of $V(G)$, $S$ is a \textit{minimal vertex separator} of $G$ if there exist two nonadjacent vertices $u,v\in V(G)\setminus S$ such that $u$ and $v$ are in different connected components of $G-S$, and no subset $S' \subsetneq S$ separates $u$ and $v$. Dirac \cite{dirac_rigid_1961} proved that $G$ is chordal if and only if every minimal vertex separator of $G$ is a clique in $G$. Also, by \cite{buneman_characterisation_1974}, every minimal separator of a chordal graph is properly (\textit{i.e.}, strictly) contained in at least two distinct maximal cliques. Furthermore, Blair and Peyton \cite{blair_introduction_1993} showed that a connected graph $G$ is chordal if and only if there exists a tree $T$ called a \emph{clique tree} of $G$ whose vertices are the maximal cliques of $G$ which satisfies the \textit{clique-intersection property}: for any two maximal cliques $K$ and $K'$, the intersection $K \cap K'$ is contained in every maximal clique lying on the unique path between $K$ and $K'$ in $T$. 

A hypergraph $H = (V, \mathcal{E}) $ is a generalization of a graph with $V$ being the set of vertices and $\mathcal{E}$ the set of hyperedges $\mathcal{E} \subset \mathcal{P}(V)$, in which a {\em hyperedge} $e \in \mathcal{E}$ can join any number of vertices. $H$ is {\em $k$-uniform} if every hyperedge contains exactly $k$ vertices. For a chordal graph $G$, let $\mathcal{K}(G)$ and $\mathcal{S}(G)$ denote the families of maximal cliques and minimal vertex separators of $G$, respectively. Viewing these cliques and separators as hyperedges yields the following hypergraph representations. The \textit{clique hypergraph} of $G$, denoted by $H_C(G)$, is the hypergraph defined by $V(H_C(G))=V(G)$, and $\mathcal{E}(H_C(G))=\mathcal{K}(G)$. The \textit{separator hypergraph} associated with $H_C(G)$, denoted by $H_S(G)$, is the hypergraph defined by $V(H_S(G)) = \bigcup_{S \in \mathcal{S}(G)} S$ and $\mathcal{E}(H_S(G)) = \mathcal{S}(G)$. Since two distinct vertices are adjacent in $G$ if and only if they belong to a common hyperedge in $H_C(G)$, the mapping $G \mapsto H_C(G)$ establishes a bijection between chordal graphs and their clique hypergraphs. Thus, $G$ uniquely determines $H_C(G)$, and conversely, $H_C(G)$ uniquely determines $G$ and hence its family of minimal vertex separators. Consequently, $H_C(G)$ also uniquely determines the associated separator hypergraph $H_S(G)$. However, the converse does not hold, since the minimal separators alone do not determine all maximal cliques of $G$. Hence, different clique hypergraphs may have the same associated separator hypergraph. Figure~\ref{sep2cli} shows a separator hypergraph $H_S$ associated with two distinct clique hypergraphs $H_C^1$ and $H_C^2$.

\begin{figure}[ht]
\centering
\begin{tikzpicture}[scale = 0.7]
        \node[shape=circle] (1) at (6,1) {$x$};        
        \node[shape=circle] (2) at (7,2) {$a$};
        \node[shape=circle] (3) at (7,0) {$b$};
        \node[shape=circle] (4) at (9,2) {$c$};        
        \node[shape=circle] (5) at (9,0) {$d$};
        \node[shape=circle] (6) at (8,1) {$y$};
        \node[shape=circle] (7) at (10,1) {$z$};        
        \node[shape=circle] (8) at (11,2) {$e$};
        \node[shape=circle] (9) at (11,0) {$f$};
        \node[shape=circle] (10) at (12,2) {$u$};
        \node[shape=circle] (11) at (12,0) {$v$};        
        \node[shape=circle] (12) at (13.5,1) {$w$};
        
        \begin{scope}[fill opacity=0]
        \filldraw[fill=yellow!70] ($(1)+(-0.5,0)$)
            to[out=90,in=90] ($(2) + (0.5,0)$)
            to[out=-90,in=90] ($(3) + (0.5,0)$)
            to[out=-90,in=-90] ($(1) + (-0.5,0)$) ;
        \end{scope}

        \begin{scope}[fill opacity=0]
        \filldraw[fill=yellow!70] ($(2)+(-0.5,0)$)
            to[out=90,in=90] ($(4) + (0.5,0)$)
            to[out=-90,in=90] ($(5) + (0.5,0)$)
            to[out=-90,in=-90] ($(3) + (-0.5,0)$)
           to[out=90,in=-90] ($(2) + (-0.5,0)$) ;
        \end{scope}

        \begin{scope}[fill opacity=0]
        \filldraw[fill=yellow!70] ($(4)+(-0.5,0)$)
            to[out=90,in=90] ($(8) + (0.5,0)$)
            to[out=-90,in=90] ($(9) + (0.5,0)$)
            to[out=-90,in=-90] ($(5) + (-0.5,0)$)
           to[out=90,in=-90] ($(4) + (-0.5,0)$) ;
        \end{scope}

        \begin{scope}[fill opacity=0]
        \filldraw[fill=yellow!70] ($(8)+(-0.5,0)$)
            to[out=90,in=180] ($(10) + (0,0.5)$)
            to[out=0,in=90] ($(12) + (0.5,0)$)
            to[out=-90,in=0] ($(11) + (0,-0.5)$)
            to[out=180,in=-90] ($(9) + (-0.5,0)$)
           to[out=90,in=-90] ($(8) + (-0.5,0)$) ;
        \end{scope}

        \node[shape=circle] (01) at (6,-4) {$x$};   
        \node[shape=circle] (02) at (6,-6) {$u$};
        \node[shape=circle] (03) at (8,-4) {$a$};
        \node[shape=circle] (04) at (8,-6) {$b$};
        \node[shape=circle] (05) at (9.2,-5) {$c$};        
        \node[shape=circle] (06) at (8,-7.5) {$y$};
        \node[shape=circle] (07) at (10,-6) {$d$};
        \node[shape=circle] (08) at (10,-7.5) {$z$};        
        \node[shape=circle] (09) at (10,-4) {$w$};
        \node[shape=circle] (010) at (12,-4) {$e$};
        \node[shape=circle] (011) at (12,-6) {$f$};        
        \node[shape=circle] (012) at (13.5,-5) {$v$};

        \begin{scope}[fill opacity=0]
        \filldraw[fill=yellow!70] ($(03)+(0,0.5)$)
            to[out=0,in=90] ($(07) + (0.5,0)$)
            to[out=-90,in=90] ($(08) + (0.5,0)$)
            to[out=-90,in=-90] ($(06) + (-0.5,0)$)
            to[out=90,in=-90] ($(03) + (-0.5,0)$)
           to[out=90,in=180] ($(03) + (0,0.5)$) ;
        \end{scope}
        
        \begin{scope}[fill opacity=0]
        \filldraw[fill=yellow!70] ($(01)+(-0.5,0)$)
            to[out=90,in=90] ($(03) + (0.5,0)$)
            to[out=-90,in=90] ($(04) + (0.5,0)$)
            to[out=-90,in=-90] ($(02) + (-0.5,0)$)
           to[out=90,in=-90] ($(01) + (-0.5,0)$) ;
        \end{scope}
        \begin{scope}[fill opacity=0]
        \filldraw[fill=yellow!70] ($(09)+(0,0.5)$)
            to[out=0,in=180] ($(010) + (0,0.5)$)
            to[out=0,in=0] ($(011) + (0,-0.5)$)
            to[out=-90,in=0] ($(011) + (0,-0.5)$)
            to[out=180,in=0] ($(07) + (0,-0.5)$)
            to[out=180,in=-90] ($(05) + (-0.5,0)$)
           to[out=90,in=180] ($(09) + (0,0.5)$) ;
        \end{scope}
        \begin{scope}[fill opacity=0]
        \filldraw[fill=yellow!70] ($(010)+(0,0.4)$)
            to[out=0,in=90] ($(012) + (0.5,0)$)
            to[out=-90,in=0] ($(011) + (0,-0.4)$)
            to[out=180,in=180] ($(010) + (0,0.4)$) ;
        \end{scope}

        \node[shape=circle] (13) at (-3,-1) {$a$};
        \node[shape=circle] (14) at (-3,-3) {$b$};
        \node[shape=circle] (15) at (-1,-1) {$c$};        
        \node[shape=circle] (17) at (-1,-3) {$d$};
        \node[shape=circle] (110) at (1,-1) {$e$};
        \node[shape=circle] (111) at (1,-3) {$f$};  

\draw[] (-3,-2) ellipse (0.7cm and 1.5cm);
\draw[] (-1,-2) ellipse (0.7cm and 1.5cm);
\draw[] (1,-2) ellipse (0.7cm and 1.5cm);
\draw[dashed] (-1,-2) ellipse (3.5cm and 2cm);
\draw[->] (3,-0.5) -- (5,0.5);
\draw[->] (3,-3.5) -- (5,-4.5);

        \node (40) at (-1,0.5) {$H_S$};        
        \node (41) at (10,3.5) {$H_C^1$};        
        \node (42) at (10,-2.8) {$H_C^2$};        
\end{tikzpicture}
\caption{A separator hypergraph $H_S$ associated with two distinct clique hypergraphs $H_C^1$ and $H_C^2$}
\label{sep2cli}
\end{figure}

\begin{obs}
Let $G=(D\cup I, E)$ be a split graph, where $D$ is a maximal clique and $I$ is an independent set. Following \cite{lalou_completely_2026}, its corresponding hypergraph is defined as $H(G)=(D,\mathcal{E})$, such that $\mathcal{E}=\{N_G(x):x\in I\}$. Every minimal vertex separator of $G$ is the neighborhood of some vertex in $I$. Consequently, $H_S(G)$ is a subhypergraph of $H(G)$, but equality does not always hold. Therefore, the separator hypergraph representation defined here can be considered as an extension of the approach from split graphs to chordal graphs.
\end{obs}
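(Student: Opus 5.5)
The Observation makes three claims. First, every minimal vertex separator of a split graph $G=(D\cup I,E)$ is $N_G(x)$ for some $x\in I$. Second, $H_S(G)$ is therefore a subhypergraph of $H(G)$. Third, equality can fail. I will prove them in that order; the third only needs an example.

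For the first claim, let $S$ be a minimal separator of $G$, with $u,v$ nonadjacent vertices lying in different components of $G-S$. Since $D$ is a clique, $G[D\setminus S]$ is complete, so all of $D\setminus S$ lies in a single component $C_0$ of $G-S$ (possibly $D\setminus S=\emptyset$). Every other component consists of vertices of $I$. An isolated vertex $x\in I$ with $N(x)\subseteq S$ is alone in its component, since $I$ is independent and $N(x)\subseteq D$. Hence at least one of $u,v$, say $u\in I$, is a singleton component $\{u\}$, and $N(u)\subseteq S$.

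Now $N(u)$ already separates $u$ from $v$: removing $N(u)$ isolates $u$, and $v\notin N(u)$ because $u$ and $v$ are nonadjacent. By minimality of $S$, we get $S=N(u)$. This also covers the degenerate case $D\setminus S=\emptyset$, where both $u$ and $v$ lie in $I$.

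I will state the split graph as connected, or else note that the empty separator arising from disconnection is excluded, since $H_S$ only collects nonempty separators as hyperedges. This gives $\mathcal{S}(G)\subseteq\{N_G(x):x\in I\}=\mathcal{E}(H(G))$. Also $V(H_S(G))=\bigcup\mathcal{S}(G)\subseteq D$, because each neighborhood of a vertex of $I$ lies in $D$. So $H_S(G)$ is a subhypergraph of $H(G)$.

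For non-equality, take the example where $D$ is a maximal clique $\{a,b\}$ and $I=\{x\}$ with $N(x)=\{a\}$. Then $G$ is the path $x\,a\,b$, and $\{a\}$ is a minimal separator; this example gives equality. To get strict containment I will use a vertex $x\in I$ whose neighborhood is not a minimal separator. This happens when $N(x)=D$, which is forbidden by maximality of $D$. It also happens when $N(x)\subsetneq N(y)$ for some other $y\in I$ and no vertex is separated by $N(x)$ except $x$ itself.

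A concrete example: $D=\{a,b,c\}$ and $I=\{x\}$ with $N(x)=\{a,b\}$. Here $\{a,b\}$ separates $x$ from $c$, so equality holds again. Instead, let $I=\{x,y\}$ with $N(x)=\{a\}$ and $N(y)=\{a\}$ as well. Then $\{a\}$ is the only separator, and $\mathcal{E}$ as a set equals $\{\{a\}\}$. I therefore need a case where $N(x)$ is not a minimal separator at all.

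The key fact is that $N(x)$ is always a separator (between $x$ and any vertex of $D\setminus N(x)$, which is nonempty since $D$ is maximal), and minimality holds because each $s\in N(x)$ is adjacent to both $x$ and the rest. So in fact $H_S(G)=H(G)$ whenever $D$ is a maximal clique and no $N(x)$ is empty. Equality can then only fail when some $x\in I$ is isolated, for example when $G$ is disconnected. This is the main obstacle: checking that the stated non-equality genuinely relies on such degenerate cases, or on conventions (for example, $D$ taken as a clique that is not maximal), and I will exhibit an example of that type.
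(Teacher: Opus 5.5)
Your arguments for the first two claims are correct; the paper states the Observation without proof, so there is nothing to compare them to. The component analysis of $G-S$ (all of $D\setminus S$ in one component, every other component a single vertex of $I$) together with minimality of $S$ does force $S=N_G(u)$. The inclusions $\mathcal{S}(G)\subseteq\mathcal{E}(H(G))$ and $\bigcup\mathcal{S}(G)\subseteq D$ then follow. The gap is in the non-equality claim, where your reasoning concludes the opposite of what must be shown.

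Your ``key fact'' is right at the level of hyperedges. For connected $G$ with $D$ maximal, each $N_G(x)$, $x\in I$, is a minimal $(x,d)$-separator for any $d\in D\setminus N_G(x)$. So $\mathcal{S}(G)$ and $\mathcal{E}(H(G))$ coincide as families of sets. That does not give $H_S(G)=H(G)$, however, because the vertex sets are defined differently. By definition $V(H(G))=D$, whereas $V(H_S(G))=\bigcup_{S\in\mathcal{S}(G)}S=\bigcup_{x\in I}N_G(x)$, which omits every vertex of $D$ with no neighbor in $I$. Your very first example already shows strict containment, contrary to your remark that it ``gives equality''. For the path $x\,a\,b$ with $D=\{a,b\}$ and $I=\{x\}$, one has $H(G)=(\{a,b\},\{\{a\}\})$ but $H_S(G)=(\{a\},\{\{a\}\})$. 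Likewise $c$ is missing from $V(H_S(G))$ in your second example.

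The fallback you announce would not prove the statement as given. Taking $D$ non-maximal contradicts the hypothesis. The disconnected case depends on a convention excluding the empty separator that the paper never adopts. Under the paper's definition, for an isolated $x\in I$ the empty set is both $N_G(x)$ and a minimal separator, so no discrepancy arises there. The missing step is simply to compare the vertex sets of the two hypergraphs, not just their hyperedges.
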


We next introduce the hypergraph coloring notions used to study the existence of CIST. A $k$-coloring of a hypergraph $H$ is an assignment of $k$ distinct colors to the vertices of $H$ so that no hyperedge is monochromatic. A {\em panchromatic} $k$-coloring of $H$ is a coloring of the vertices of $H$ such that each hyperedge contains at least one vertex of each color. A color that appears once in the hypergraph is called a {\em unique} color. The problem of deciding whether a hypergraph is panchromatically $2$-colorable is NP-Complete \cite{garey}, and by extension for $k\geq 3$. Moreover, if a panchromatic $k$-coloring $\varphi$ of a hypergraph $H$ is such that each of the $k$ colors appears at least twice in $H$, then $\varphi$ is called {\em bipanchromatic} \cite{lalou_completely_2026}. 
The panchromatic (resp. bipanchromatic) number, denoted $\chi_p(H)$ (resp. $\chi^2_p(H)$), is the maximum $k$ such that $H$ admits a panchromatic (resp. bipanchromatic) $k$-coloring. 

\section{Counterexample}
\label{sec3}
Lalou et al. \cite{lalou_completely_2026} asked whether a bipanchromatic $\chi_p^2(H)$-coloring of $H$ can be obtained from a panchromatic $\chi_p(H)$-coloring that minimizes the number of vertices of unique color $\alpha(H)$, by repeatedly pairing the unique colors until none remain. Based on their calculations, they proposed the following conjecture:
\begin{conjecture}[\cite{lalou_completely_2026}]
    Every hypergraph $H$ satisfies $\chi^2_p (H)= \chi_p (H) -  \left\lceil \frac{\alpha(H)}{2} \right\rceil$.
    \label{conj}
\end{conjecture}

If true, this conjecture would be particularly useful, as it would restrict the maximum number of CIST in a split graph to one of the following two values: $\chi_p (H(G)) -  \left\lceil \frac{\alpha}{2} \right \rceil$ and $\chi_p (H(G)) -  \left\lceil \frac{\alpha}{2}\right \rceil + 1$, where $\alpha=\alpha(H(G))$ \cite[Proposition~3]{lalou_completely_2026}. However, in what follows, we prove that this conjecture does not hold in general by constructing an infinite family of hypergraphs that violates it:
\begin{prop}
    There exists an infinite number of hypergraphs that do not satisfy Conjecture \ref{conj}.
\end{prop}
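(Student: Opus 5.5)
The plan is to use the gap between the two sides of Conjecture~\ref{conj}. First I would prove that one inequality always holds, so that a counterexample must violate the other one. Then I would build an explicit infinite family $(H_k)_{k\ge k_0}$ and compute $\chi_p(H_k)$, $\alpha(H_k)$ and $\chi_p^2(H_k)$ exactly.

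\emph{Step 1: the easy inequality.} Take a panchromatic $\chi_p(H)$-coloring with exactly $\alpha=\alpha(H)$ unique colors. Merging two color classes preserves panchromaticity, since every hyperedge still meets every surviving color. So I would pair the unique colors two by two. If $\alpha$ is odd, the leftover unique color is merged into any other class. This yields a bipanchromatic coloring with $\chi_p(H)-\lceil \alpha/2\rceil$ colors, and hence $\chi_p^2(H)\ge \chi_p(H)-\lceil \alpha/2\rceil$ in general. (The degenerate case where all colors are unique, i.e.\ $\alpha=\chi_p$, needs a line of care, but it is not needed for the construction.) A counterexample must therefore satisfy the strict inequality $\chi_p^2(H)>\chi_p(H)-\lceil\alpha(H)/2\rceil$. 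In words, some bipanchromatic coloring with more colors must exist that is \emph{not} obtained by merging classes of an optimal panchromatic coloring.

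\emph{Step 2: the construction.} I would build $H_k$ so that it has the following three features.
\begin{enumerate}[label=(\roman*)]
\item There is a small hyperedge $A=\{a_1,\dots,a_k\}$, forcing $\chi_p(H_k)\le k$.
\item A rigid gadget of further hyperedges of size exactly $k$ is attached to $A$, sharing many vertices of $A$. In any panchromatic $k$-coloring, $A$ is then rainbow and every other vertex has its color determined. This forces a fixed set of about $2r$ colors (with $r$ growing with $k$) to appear only on $A$, so $\alpha(H_k)\approx 2r$ and the conjectured value is about $k-r$.
\item The gadget hyperedges have one or two spare vertices, so that with only $k-1$ colors the rigidity disappears. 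The spare vertices can then be recolored to duplicate the colors on $A$, giving a bipanchromatic coloring with $k-1$ or $k-2$ colors.
\end{enumerate}
Choosing $r\ge 3$, these features give $\chi_p^2(H_k)\ge k-2>k-r\ge \chi_p-\lceil\alpha/2\rceil$, which violates the conjecture for every $k$ in the family.

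\emph{Step 3: verification, in order.}
\begin{enumerate}[label=(\alph*)]
\item Show $\chi_p(H_k)=k$ by exhibiting a coloring; the upper bound comes from $|A|=k$.
\item Show the rigidity: in every panchromatic $k$-coloring each size-$k$ gadget edge is rainbow. Propagating along the shared vertices, each gadget vertex receives a forced color. Counting occurrences then gives a lower bound on the number of unique colors, hence $\alpha(H_k)\ge 2r$.
\item Exhibit the explicit bipanchromatic $(k-1)$- or $(k-2)$-coloring.
\end{enumerate}

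\emph{Main obstacle.} The hard part is designing the gadget so that (ii) and (iii) hold at the same time, because the natural first attempts fail in one direction or the other. If gadget edges share $k-2$ vertices with $A$, the two outside vertices absorb two colors of $A$ and all colors end up appearing twice, so $\alpha=0$. If gadget edges share $k-1$ vertices with $A$ through a common extra vertex, then $\chi_p$ itself drops. My first try would be a gadget whose extra vertices lie in several size-$k$ hyperedges that pairwise miss \emph{different} elements of $A$. With $k$ colors such a vertex is forced to copy one specific color, leaving the complementary colors unique. With $k-1$ colors each gadget edge gains one degree of freedom, enough to spread the duplicates. I would tune the overlap pattern on small $k$ (say $k=6,8$) by hand or by computer search, then generalise the pattern by induction on $k$ to get the infinite family.
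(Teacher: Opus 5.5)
\textbf{There is a genuine gap: the proposal never produces a hypergraph.} The proposition is purely existential, so its whole content is an explicit family together with the computation of $\chi_p$, $\alpha$ and $\chi_p^2$. Your Steps 2 and 3 only list properties (i)--(iii) that such a family should have. You then defer the actual gadget to a search by hand or computer for $k=6,8$, followed by an unspecified induction. Nothing in the proposal shows that (ii) and (iii) can hold at the same time; your ``main obstacle'' paragraph lists the difficulties but does not resolve them. Step 1 is correct: merging classes preserves panchromaticity, so $\chi_p^2(H)\ge \chi_p(H)-\lceil\alpha(H)/2\rceil$ outside degenerate cases. But it only tells you which inequality a counterexample must violate. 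It contributes nothing to existence.

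The paper fills the gap with a very rigid ``sunflower'' $\mathcal{H}_{d,q}$. It has a core $U$ with $|U|=6d$, petals $A,B,C$ with $q\ge 2d$ vertices each, and hyperedges $U\cup A$, $U\cup B$, $U\cup C$. The three quantities are computed as follows.
\begin{itemize}
\item Uniformity gives $\chi_p=6d+q$.
\item In any optimal coloring every hyperedge is rainbow, so no color of $U$ can reappear in a petal; hence $\alpha=6d$.
\item For the lower bound on $\chi_p^2$, pair each $U$-vertex with a petal vertex and group the leftover petal vertices into $A$--$B$--$C$ triples. For the upper bound, count the classes meeting $U$ (each needs a second vertex) and the classes avoiding $U$ (each needs a vertex in every petal). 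Together these give $\chi_p^2=4d+q>3d+q$.
\end{itemize}

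Your own template also works once you see that what breaks $\chi_p$ is a \emph{shared} extra vertex, not sharing $k-1$ vertices with $A$. Take $A=\{a_1,\dots,a_k\}$ and hyperedges $A$ and $(A\setminus\{a_k\})\cup\{w_j\}$ for $j=1,\dots,k-2$, with distinct $w_j$.
\begin{itemize}
\item With $k$ colors every hyperedge is rainbow, so each $w_j$ must copy the color of $a_k$, giving $\chi_p=k$ and $\alpha=k-1$.
\item With $k-1$ colors, give $a_1$ and $a_k$ the same color. Then $A\setminus\{a_k\}$ already carries all colors, and the free $w_j$ can duplicate the colors of $a_2,\dots,a_{k-1}$.
\item Hence $\chi_p^2=k-1>k-\lceil (k-1)/2\rceil$ for every $k\ge 4$.
\end{itemize}
Without some such concrete family and its verification, your argument does not prove the statement.
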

\begin{proof}
    Let $d \geq1$ and $q \geq 2d$ two integers. Consider the four disjoint sets of vertices $U = \{ u_1,u_2,\dots,u_{6d} \}$, $A = \{ a_1,a_2,\dots,a_q \}$, $B = \{ b_1,b_2,\dots,b_q \}$ and $C = \{ c_1,c_2,\dots,c_q \}$. We define the hypergraph $\mathcal{H}_{d,q}$ such that $V(\mathcal{H}_{d,q}) = U \cup A \cup B \cup C$ and $\mathcal{E}(\mathcal{H}_{d,q}) = \{ U \cup A, U \cup B, U \cup C \}$ as illustrated in Figure~\ref{t}. Clearly, $\mathcal{H}_{d,q}$ is $(6d +q)$-uniform, and hence $\chi_p \leq 6d+q$. We next show that $\mathcal{H}_{d,q}$ does not satisfy the conjecture: 
    $$\chi_p(\mathcal{H}_{d,q}) = 6d+q \ , \
    \alpha(\mathcal{H}_{d,q}) = 6d \ , \  \mbox{ and } \ 
    \chi_p^2(\mathcal{H}_{d,q}) = 4d+q > \chi_p - \left\lceil \frac{\alpha}{2} \right\rceil = 3d+q .
    $$
\begin{sloppypar}
We first construct a panchromatic $(6d+q)$-coloring of $\mathcal{H}_{d,q}$. Let $P =\{p_1,p_2,\dots,p_{6d+q} \}$ be a set of colors. For each $i\in \{1,\ldots,6d \}$, assign color $p_i$ to $u_i$. Moreover, for each $j\in \{1,\ldots,q\}$, assign color $p_{6d+j}$ to each of the three vertices $a_j$, $b_j$, and $c_j$. Each hyperedge $U\cup A$, $U\cup B$, and $U\cup C$ therefore contains every color in $P$. Hence, $\chi_p(\mathcal{H}_{d,q})\geq 6d+q$. Together with the bound $\chi_p(\mathcal{H}_{d,q})\leq 6d+q$, this gives $\chi_p(\mathcal{H}_{d,q})=6d+q$.
\end{sloppypar}
Now consider any panchromatic $\chi_p(\mathcal{H}_{d,q})$-coloring. Since every hyperedge contains exactly $\chi_p(\mathcal{H}_{d,q})$ vertices, all vertices within each hyperedge must have distinct colors. In particular, the vertices of $U$, which belong to all three hyperedges, must receive pairwise distinct colors. Furthermore, no vertex in $A\cup B\cup C$ can receive a color assigned to a vertex of $U$, since it will duplicate that color in one of the hyperedges and force another color to be absent. Therefore, each color assigned to a vertex of $U$ appears exactly once in the entire hypergraph. Thus, $\alpha=6d$.

Next, we show that $\chi_p^2(\mathcal{H}_{d,q})=4d+q$. We first construct a bipanchromatic $4d+q$-coloring of $\mathcal{H}_{d,q}$. Let $U$ be partitioned equally into three sets $U_A$, $U_B$, and $U_C$, each of cardinality $2d$. Let $P'= \{p'_1,p'_2,\dots,p'_{4d+q} \}$ be a set of colors. Pair each vertex of $U_A$ with a distinct vertex of $A$, each vertex of $U_B$ with a distinct vertex of $B$, and each vertex of $U_C$ with a distinct vertex of $C$; then, assign a distinct color to the two vertices of each pair. This gives pairs colored with $6d$ distinct colors. Next, $q-2d$ vertices remain uncolored in each of $A$, $B$, and $C$. Group the remaining vertices of these three sets into $q-2d$ triples, each consisting of one vertex from $A$, one from $B$, and one from $C$, and assign a distinct new color to all three vertices of each triple. Therefore, the number of colors used is $6d + (q-2d) = 4d+q$.

\begin{figure}[ht]
    \begin{center}
    \begin{tikzpicture}[scale = 0.7]
        \node[shape=circle] (1) at (1,0) {$u_1$};
        \node[shape= circle] (40) at (1,-0.8) {$\vdots$};         
        \node[shape=circle] (7) at (1,-2) {$u_{6d}$};

        \node[shape=circle,red] (8) at (0,-4.5) {$a_1$};
        \node[shape=circle,red] (100) at (1,-4.5) {$\cdots$};
        \node[shape=circle,red] (11) at (2,-4.5) {$a_q$};

        \node[shape=circle,green] (12) at (-2.2,0) {$b_1$};
        \node[shape=circle,green] (130) at (-2.2,-0.8) {$\vdots$};
        \node[shape=circle,green] (14) at (-2.2,-2) {$b_q$};

        \node[shape=circle,blue] (15) at (4.2,0) {$c_1$};
        \node[shape=circle,blue] (160) at (4.2,-0.8) {$\vdots$};
        \node[shape=circle,blue] (17) at (4.2,-2) {$c_q$};

\draw[green] (-0.3,-1) ellipse (3.3cm and 2.7cm);
\draw[blue] (2.3,-1) ellipse (3.3cm and 2.7cm);
\draw[red] (1,-2.3) ellipse (2.5cm and 3.3cm);
    \end{tikzpicture}
        \caption{Illustration of the hypergraph $\mathcal{H}_{d,q}$}
        \label{t}
    \end{center}
\end{figure}

Moreover, this coloring is also an optimal bipanchromatic coloring. Consider an arbitrary bipanchromatic coloring $\phi$ of $\mathcal{H}_{d,q}$, and partition its color classes into two sets $X\cup Y$, where $X$ denotes the set of color classes containing at least one vertex of $U$, and $Y$ denotes the set of color classes containing no vertex of $U$. Since $|U|=6d$, we have $|X| \leq 6d$.
Moreover, every color class in $Y$ must intersect each of the three hyperedges. And since it contains no vertex of $U$, it must contain at least one vertex from each of $A$, $B$, and $C$. Hence, $|Y| \leq q$. In order to maximize the total number of colors, each color class in $X$ should contain exactly one vertex of $U$ and one vertex of $A \cup B \cup C$. Therefore, the classes in $X$ use $6d$ vertices of $A\cup B\cup C$. The remaining $3q-6d$ vertices can then form $\frac{3q-6d}{3}$ color classes in $Y$. Therefore, $|\phi| = 6d+\frac{3q-6}{3} = 4d+q$. Consequently, $\chi_p^2(\mathcal{H}_{d,q}) = 4d+q$. 
\end{proof}
This construction also gives an infinite family of hypergraphs whose bipanchromatic number exceeds the value predicted by the conjecture by exactly $d$. For example, for $d=3$ and $q = 7$,  $\chi_p^2(\mathcal{H}_{3,7})= 19$; however, the conjecture would predict $\chi_p^2(\mathcal{H}_{3,7}) = \chi_p(\mathcal{H}_{3,7})- \frac{\alpha(\mathcal{H}_{3,7})}{2} = 16$.

Since Conjecture~\ref{conj} does not hold in general, it is preferable to work directly with bipanchromatic colorings when deriving existence conditions for CIST. Indeed, deriving a bipanchromatic coloring by pairing colors from a panchromatic coloring may yield fewer colors than optimizing directly over the bipanchromatic coloring. Note that the number of colors in a bipanchromatic coloring influences the number of CIST that can be constructed, which we aim to maximize. Moreover, in \cite{lalou_completely_2026}, the panchromatic and bipanchromatic colorings considered in this conjecture were used to study CIST in split graphs. We therefore extend this approach to chordal graphs, for which it is particularly well-suited because bipanchromatic colorings of their separator hypergraphs provide structural information that can be exploited in the construction of CIST, as we will see in the next section.

\section{Hypergraph-Based Conditions for CIST in Chordal Graphs}
In this section, using the clique hypergraph $H_C(G)$ and its associated separator hypergraph $H_S(G)$, we establish relationships between panchromatic and bipanchromatic colorings and the existence of CIST in chordal graphs. Moreover, since the optimal number of CIST in complete graphs has already been determined \cite{3}, we consider in this research work chordal graphs that are not complete graphs.

\begin{thm}
Let $G$ be a connected chordal graph. If $G$ admits $k$ CIST, then $H_C(G)$ and $H_S(G)$ both admit a panchromatic $k$-coloring.
\end{thm}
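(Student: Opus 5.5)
Given $k$ CIST $T_1,\dots,T_k$ of $G$, the natural coloring is by ``internal tree'': by Theorem~\ref{thm00}, each vertex $x$ is internal in at most one $T_i$. So I would set $\varphi(x)=i$ if $x$ is internal in $T_i$, and give an arbitrary color to a vertex that is a leaf in every tree. Two claims then remain: every maximal clique contains an internal vertex of each $T_i$, and so does every minimal vertex separator. Once both hold, $\varphi$ is a panchromatic $k$-coloring of $H_C(G)$, and its restriction to $V(H_S(G))$ is one of $H_S(G)$. Since $G$ is connected and not complete, it has at least one minimal separator and $n\ge 3$. Each $T_i$ is then a spanning tree on at least three vertices, so it has internal vertices and all $k$ colors are actually used.

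I would handle separators first. Let $S$ be a minimal separator, with $u,v$ in different components of $G-S$. The $(u,v)$-path in $T_i$ must pass through $S$, and its internal vertices are internal in $T_i$. Since $u,v\notin S$, some vertex of $S$ is an internal vertex of that path. Hence $S$ contains a vertex of color $i$, for every $i$.

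For a maximal clique $K$, I would reduce to the separator case. Because $G$ is not complete, $K\neq V(G)$, so $K$ has a neighbor $K'$ in a clique tree. The set $S=K\cap K'$ is a minimal separator contained in $K$, by the standard fact that edges of a clique tree correspond to minimal separators. By the previous paragraph, $S$ contains a vertex of each color, and hence so does $K$.

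If I wanted to avoid clique-tree facts, I would argue directly. Pick $x\in K$ with a neighbor $w\notin K$; such a pair exists since $K$ is maximal, not all of $V(G)$, and $G$ is connected. Then take $y\in K$ not adjacent to $w$, which exists by maximality of $K$. The set $K\setminus\{x,y\}$ together with some structure would need to separate, and this is messier. So I would rely on the clique-tree correspondence. The main obstacle is precisely justifying that every maximal clique of a non-complete connected chordal graph contains some minimal separator. I would cite the clique-intersection property from \cite{blair_introduction_1993}: for adjacent cliques $K,K'$ in the clique tree, $K\cap K'$ separates $K\setminus K'$ from $K'\setminus K$. Both of these sets are nonempty, and the separation is minimal.
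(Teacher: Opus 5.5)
Your proposal is correct and follows the paper's proof essentially step by step: you use the same coloring by the tree in which a vertex is internal, the same argument that the $(u,v)$-path in each $T_i$ must pass through an internal vertex lying in $S$, and the same reduction of a maximal clique $K$ to the minimal separator $K\cap K'$ for a clique-tree neighbor $K'$. You also make explicit the standing assumption that $G$ is not complete, which guarantees that $K$ has such a neighbor; the paper leaves this implicit.
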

\begin{proof}
Let $T_1,\dots,T_k$ be $k$ CIST of $G$. By Theorem~\ref{thm00}, every vertex of $G$ is internal in at most one tree among $T_1,\dots,T_k$. Let $\varphi$ be the $k$-coloring of $H_S(G)$ defined as follows. If a vertex $x$ is internal in $T_i$, then color $i$ is assigned to $x$. The vertices that are leaves in all the trees are assigned an arbitrary color. 

Let $S\in E(H_S(G))$ be a minimal vertex separator of $G$. By definition, there exist two vertices $u,v\in V(G)\setminus S$ that are in distinct connected components of $G-S$. For every $i \in {1,\dots,k}$, consider the unique path $P_i$ that connects $u$ and $v$ in $T_i$. Since $S$ separates $u$ and $v$ in $G$, the path $P_i$ must contain at least one vertex $x_i\in S$; therefore, $x_i$ is an internal vertex of $P_i$ and, consequently, an internal vertex of $T_i$. By the definition of $\varphi$, $x_i$ is assigned color $i$. Thus, $S$ contains at least one vertex of each color $i\in{1,\dots,k}$. It follows that every hyperedge of $H_S(G)$ contains all $k$ colors. Consequently, $\varphi$ is a panchromatic $k$-coloring of $H_S(G)$.

Also, every maximal clique of $G$ contains a minimal vertex separator which is the intersection with one of its neighboring cliques. It follows that, when extending $\varphi$ to $H_C(G)$ by assigning an arbitrary color to every vertex not belonging to $V(H_S(G))$, every hyperedge of $H_C(G)$ will contain all $k$ colors. Therefore, the extended $k$-coloring of $H_C(G)$ is panchromatic.
\end{proof}

\begin{lemme}
Let $G$ be a connected chordal graph, and let $\varphi$ be a panchromatic $k$-coloring of $H_S(G)$, with color classes $D_1,\dots,D_k$. Then each $D_i$ is a connected dominating set of $G$.
\label{lemme0}
\end{lemme}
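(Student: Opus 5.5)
The plan is to work with a clique tree $T$ of $G$, which exists by the result of Blair and Peyton recalled in Section~\ref{sec2}. I will also use the standard fact about clique trees (also in \cite{blair_introduction_1993}) that for every edge $KK'$ of $T$, the set $K\cap K'$ is a minimal vertex separator of $G$. Recall that the paper restricts attention to non-complete chordal graphs. Then $G$ has at least two maximal cliques, so $T$ has at least one edge and every node of $T$ has at least one neighbour in $T$. Consequently every maximal clique $K$ contains some minimal separator $K\cap K'$. Since $\varphi$ is panchromatic on $H_S(G)$, each such separator contains a vertex of every color $i$. In particular every $D_i$ is nonempty.

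\emph{Domination.} Let $v\in V(G)$ and fix a color $i$. I pick a maximal clique $K$ containing $v$ and a neighbour $K'$ of $K$ in $T$. The minimal separator $S=K\cap K'$ contains a vertex $z\in D_i$. Both $v$ and $z$ lie in the clique $K$, so either $v=z\in D_i$ or $v$ is adjacent to $z\in D_i$. Hence $D_i$ dominates $G$.

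\emph{Connectivity.} Let $x,y\in D_i$, and choose maximal cliques $K_x\ni x$ and $K_y\ni y$. If $K_x=K_y$, then $x$ and $y$ are adjacent. Otherwise, let $K_x=K_0,K_1,\dots,K_m=K_y$ be the path between them in $T$. For each $j=1,\dots,m$, the set $S_j=K_{j-1}\cap K_j$ is a minimal separator, so it contains some $z_j\in D_i$. Consecutive vertices of the sequence $x,z_1,z_2,\dots,z_m,y$ lie in a common clique:
\begin{itemize}
\item $x,z_1\in K_0$;
\item $z_j,z_{j+1}\in K_j$ for $1\le j<m$;
\item $z_m,y\in K_m$.
\end{itemize}
So consecutive terms are equal or adjacent, and all terms lie in $D_i$. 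This gives an $(x,y)$-walk inside $G[D_i]$, which shows that $G[D_i]$ is connected.

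The only nontrivial ingredient is the fact that edges of a clique tree correspond to minimal separators. If I prefer to derive it rather than cite it, I would argue as follows. Removing the edge $KK'$ splits $T$ into two subtrees; take $u\in K\setminus K'$ and $w\in K'\setminus K$ (they exist since $K\ne K'$ are maximal). Using the clique-intersection property, $K\cap K'$ separates $u$ from $w$. Minimality holds because every vertex $s\in K\cap K'$ is adjacent to both $u$ and $w$, so any proper subset of $K\cap K'$ leaves a path $u\,s\,w$. I expect this to be the main step needing care; the rest is the routine walk along the clique tree described above.
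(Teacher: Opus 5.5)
Your proof is correct and follows essentially the same route as the paper. You walk along a path in a clique tree, pick a vertex of color $i$ in each edge label (a minimal separator), and use the shared maximal cliques for adjacency; domination comes from a separator contained in a clique through $v$. One small difference: you anchor the path at maximal cliques $K_x\ni x$ and $K_y\ni y$, while the paper anchors it at edges labeled by separators $S_x\ni x$ and $S_y\ni y$. Your choice avoids the paper's unstated reliance on the fact that every minimal separator labels an edge of the clique tree, and you also make explicit the non-completeness assumption needed for every maximal clique to contain a minimal separator.
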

\begin{proof}
Let $T$ be a clique tree of $G$ such that the intersections of its endpoint cliques label its edges. These intersections are minimal vertex separators of $G$. For $i \in {1,\dots,k}$. Let $x,y\in D_i$, and let $S_x$ and $S_y$ be minimal separators containing $x$ and $y$, respectively. Over the path containing the edge labeled by $S_x$ to the one labeled by $S_y$ in $T$, every edge label is a minimal separator; therefore, it contains a vertex of color $i$. Moreover, the vertices of color $i$ that are chosen from two consecutive edge labels belong to a common maximal clique and are thus adjacent. Hence, these vertices form a path that connects $x$ to $y$, and $D_i$ is then connected. 

Let $v \in V(G)\setminus D_i$, and let $K$ be a maximal clique containing $v$. $K$ contains a minimal separator $S$. Since $\varphi$ is panchromatic, $S$ contains a vertex $x\in D_i$. And since $v,x\in K$, they are adjacent. Therefore, every vertex outside of $D_i$ has a neighbor in $D_i$. Consequently, $D_i$ is a connected dominating set of $G$.
\end{proof}

Thus, each color class can serve as the set of internal vertices of a spanning tree. Moreover, to ensure that the vertices belonging to the other color classes can be attached without creating common edges between the resulting trees, we introduce the following property.

Let $\varphi$ be a panchromatic $k$-coloring of $H_S(G)$, with color classes $D_1,\ldots,D_k$. We say that $\varphi$ is \textit{pairwise separated} if, for every pair $1\leq i<j\leq k$, there exist two edge-disjoint sets
$F_{ij}^{i},F_{ij}^{j}\subseteq \{ (x,y) \in E(G) | x \in D_i, y \in D_j \}$ such that every vertex of $D_j$ is incident with an edge of $F_{ij}^{i}$, and every vertex of $D_i$ is incident with an edge of $F_{ij}^{j}$.

\begin{thm}
Let $G$ be a connected chordal graph. If $H_S(G)$ admits a pairwise separated bipanchromatic $k$-coloring, then $G$ has $k$ CIST.
\label{thm01}
\end{thm}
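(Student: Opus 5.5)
The plan is to build the $k$ trees explicitly, one per color class, and then verify the two conditions of Theorem~\ref{thm00}. Let $D_1,\dots,D_k$ be the color classes of the pairwise separated bipanchromatic $k$-coloring $\varphi$ of $H_S(G)$. Let $R=V(G)\setminus V(H_S(G))$ be the set of vertices lying in no minimal separator. Fix the edge sets $F_{ij}^{i},F_{ij}^{j}$ given by the pairwise separated property. By discarding edges, I will assume that each vertex of $D_j$ is incident with exactly one chosen edge of $F_{ij}^{i}$, and symmetrically each vertex of $D_i$ with exactly one chosen edge of $F_{ij}^{j}$.

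\emph{Construction of $T_i$.} By Lemma~\ref{lemme0}, $D_i$ is a connected dominating set of $G$, so $G[D_i]$ is connected and I take a spanning tree $T_i^0$ of $G[D_i]$. For every $j\neq i$, I attach each vertex $y\in D_j$ to $T_i^0$ through its unique selected edge of $F_{ij}^{i}$ (written $F_{ji}^{i}$ when $j<i$), whose other endpoint lies in $D_i$. For every $v\in R$, I pick one neighbor of $v$ in $D_i$ and add that edge. Such a neighbor exists by the argument of Lemma~\ref{lemme0}: any maximal clique containing $v$ contains a minimal separator, which meets $D_i$ because $\varphi$ is panchromatic. Each added vertex is pendant, so $T_i$ is a spanning tree of $G$. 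Moreover, every vertex outside $D_i$ is a leaf of $T_i$.

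\emph{Verification.} Since the $D_i$ are disjoint, each vertex has degree greater than $1$ in at most one tree, namely the tree indexed by its own color class, and the vertices of $R$ are leaves everywhere. It then remains to check that the trees are pairwise edge-disjoint, which I will do by splitting the edges of $T_i\cup T_j$ according to their endpoint classes.
\begin{itemize}
\item Edges of $T_i^0$ lie inside $D_i$, so they cannot occur in $T_j$. The only edges of $T_j$ touching $D_i$ go from $D_i$ to $D_j$ or from $D_i$ to $R$, and none of these lies inside $D_i$.
\item An edge between $D_i$ and $D_j$ used in $T_i$ belongs to $F^{i}$, while one used in $T_j$ belongs to $F^{j}$. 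These sets are edge-disjoint by the pairwise separated hypothesis.
\item Edges from $R$ go to $D_i$ in $T_i$ and to $D_j$ in $T_j$, so they are distinct.
\item Edges between $D_j$ and $D_l$ with $j,l\neq i$ do not occur in $T_i$ at all.
\end{itemize}
Theorem~\ref{thm00} then yields that $T_1,\dots,T_k$ are CIST.

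The main point of care is the role of the bipanchromatic condition together with the pairwise separated condition. If two classes were both singletons $\{x\}$ and $\{y\}$, then $F_{ij}^{i}$ and $F_{ij}^{j}$ would both have to contain the edge $xy$, contradicting edge-disjointness. Since $|D_i|\ge 2$ for all $i$, this degenerate case cannot arise and the hypotheses are consistent. I would also double-check two further points. First, every vertex of $G$ is covered by exactly one of $D_1,\dots,D_k,R$. Second, the single-edge selection really produces a tree, with no cycle created between two vertices of the same $D_j$; this holds because each attached vertex receives exactly one edge.
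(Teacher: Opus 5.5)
Your proposal is correct and follows the paper's proof essentially step for step. You take a spanning tree of $G[D_i]$ via Lemma~\ref{lemme0}, attach each vertex of $D_j$ as a pendant through one edge of $F_{ij}^{i}$ and each vertex outside $V(H_S(G))$ through a neighbor in $D_i$, get internal-disjointness from the disjointness of the classes and edge-disjointness by sorting edges by endpoint classes, and conclude with Theorem~\ref{thm00}. Your case analysis is slightly more explicit than the paper's (for example for edges incident with vertices outside $V(H_S(G))$), and your closing remark on bipanchromaticity is unnecessary, since for $k\ge 2$ the pairwise separated condition alone already forces every class to have at least two vertices.
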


\begin{proof}
Let $\varphi$ be a bipanchromatic $k$-coloring with color classes $D_1,\dots,D_k$. By Lemma~\ref{lemme0}, each $D_i$ is a connected dominating set of $G$. It follows that, for every $i$, the subgraph $G[D_i]$ contains a spanning tree $T'_i$ that will be used to construct a spanning tree $T_i$ in $G$. For every $j \neq i$ and every vertex $v\in D_j$, add exactly one edge of $F_{ij}^{i}$ incident with $v$, to $T_i$. Moreover, for every vertex $v\in V(G)\setminus\bigcup_{t=1}^{k}D_t$, take one neighbor $x_i \in D_i$ (this neighbor exists since $D_i$ is dominating), then add the edge $(v,x_i)$ to $T_i$. With this construction, every vertex outside $D_i$ is connected to $T_i$ by one edge. Consequently, $T_i$ is a spanning tree of $G$. Since the color classes are pairwise disjoint, the trees $T_1,\dots,T_k$ are internally disjoint.

These trees are also edge-disjoint. The edges of $T'_i$ have both extremities in $D_i$; however, every edge added to construct $T_i$ has exactly one extremity in $D_i$, which means that these edges do not belong to $T'_i$.  Since $\varphi$ is pairwise separated, for every pair $i \neq j$, the edges from $F_{ij}^{i}$ used to connect $D_j$ to $D_i$ in $T_i$ and the ones from $F_{ij}^{j}$ used to attach $D_i$ to $D_j$ in $T_j$ are edge-disjoint. Hence, $T_1,\dots,T_k$ are $k$ CIST of $G$.
\end{proof}
Figure~\ref{expl} illustrates a chordal graph admitting two CIST and a pairwise separated bipanchromatic $2$-coloring of its associated separator hypergraph.
\begin{figure}[ht]
\centering
\begin{tikzpicture}[scale = 0.7]        
        \node[shape=circle,draw=black,fill=red] (2) at (0,0) {$a$};
        \node[shape=circle,draw=black,fill=green] (3) at (2,2) {$b$};
        \node[shape=circle,draw=black,fill=red] (4) at (2,-2) {$c$}; 
        \node[shape=circle,draw=black,fill=red] (5) at (4,2) {$d$};
        \node[shape=circle,draw=black,fill=green] (6) at (4,-2) {$e$};
        \node[shape=circle,draw=black,fill=green] (7) at (6,2) {$f$}; 
        \node[shape=circle,draw=black,fill=red] (8) at (6,-2) {$g$};
        \node[shape=circle,draw=black,fill=green] (9) at (8,0) {$h$};

    \draw[-,green] (2) -- (3);
    \draw[-,red] (2) -- (4);
    \draw[-] (4) -- (3);
    \draw[-,red] (5) -- (3);
    \draw[-,green] (4) -- (6);
    \draw[-,green] (6) -- (3);
    \draw[-,red] (5) -- (4);
    \draw[-] (5) -- (6);
    \draw[-,green] (5) -- (7);
    \draw[-,red] (8) -- (6);
    \draw[-,green] (7) -- (8);
    \draw[-,red] (4) -- (7);
    \draw[-] (7) -- (9);
    \draw[-,red] (5) -- (8);
    \draw[-,green] (6) -- (7);
    \draw[-,green] (6) -- (9);
    \draw[-,red] (8) -- (9);

    \node[shape=circle,draw=black,fill = green] (04) at (12,2) {$b$};
    \node[shape=circle,draw=black,fill=red] (05) at (12,-2) {$c$};    
    \node[shape=circle,draw=black,fill=red] (07) at (14.3,2) {$d$};
    \node[shape=circle,draw=black,fill=green] (08) at (14.3,-2) {$e$};
    \node[shape=circle,draw=black,fill=green] (011) at (16.6,2) {$f$}; 
    \node[shape=circle,draw=black,fill=red] (012) at (16.6,-2) {$g$};

    \draw[] (12,0) ellipse (1.2cm and 2.8cm);

        \begin{scope}[fill opacity=0]
        \filldraw[fill=yellow!70] ($(05)+(-0.7,-0.2)$)
            to[out=90,in=90] ($(07) + (0.7,0.5)$)
            to[out=-90,in=90] ($(08) + (0.7,-0.2)$)
            to[out=-90,in=-90] ($(05) + (-0.7,-0.2)$); 
        \end{scope}

        \begin{scope}[fill opacity=0]
        \filldraw[fill=yellow!70] ($(07)+(-0.7,0)$)
            to[out=90,in=90] ($(011) + (0.7,0)$)
            to[out=-90,in=-90] ($(08) + (-0.7,-0.3)$)
            to[out=90,in=-90] ($(07) + (-0.7,0)$); 
        \end{scope}

        \begin{scope}[fill opacity=0]
        \filldraw[fill=yellow!70] ($(08)+(-0.6,-0.2)$)
            to[out=90,in=90] ($(011) + (0.8,0.5)$)
            to[out=-90,in=90] ($(012) + (0.8,-0.2)$)
            to[out=-90,in=-90] ($(08) + (-0.6,-0.2)$); 
        \end{scope}
        
        \node (41) at (4,-4) {$G$};        
        \node (42) at (14.3,-4) {$H_S(G)$};        
\end{tikzpicture}
\caption{A chordal graph $G$ with two CIST whose separator hypergraph $H_S(G)$ admits a pairwise separated bipanchromatic $2$-coloring}
\label{expl}
\end{figure}

In split graphs, any bipanchromatic coloring is pairwise separated. Indeed, all vertices of the associated hypergraph belong to the main clique of the split graph and are therefore pairwise adjacent. Moreover, the bipanchromaticity guarantees that every color class contains at least two vertices, allowing the edges required in both directions between any two color classes to be chosen while guaranteeing disjunction. Consequently, in split graphs, a bipanchromatic $k$-coloring alone is sufficient to guarantee the existence of $k$ CIST. A class of chordal graphs that may satisfy the same property as split graphs is the class of strictly chordal graphs, that is, chordal graphs in which all minimal vertex separators are disjoint \cite{markenzon_new_2015}.

\begin{corollary} \label{coro1}
Let G be a connected strictly chordal graph. If the separator hypergraph $H_S(G)$ admits a bipanchromatic $k$-coloring, then $G$ admits $k$ CIST.
\end{corollary}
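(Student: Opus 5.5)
The plan is to show that, in a strictly chordal graph, every bipanchromatic $k$-coloring $\varphi$ of $H_S(G)$ is automatically pairwise separated, and then to invoke Theorem~\ref{thm01}. Let $D_1,\dots,D_k$ be the color classes and fix $i<j$. Since the minimal separators are pairwise disjoint and $V(H_S(G))$ is their union, every colored vertex lies in \emph{exactly one} minimal separator. Each separator is a clique by Dirac's theorem, and by panchromaticity it contains at least one vertex of each of $D_i$ and $D_j$. I will build $F_{ij}^i$ and $F_{ij}^j$ separator by separator, arguing in two cases.

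\emph{Case 1: $G$ has a single minimal separator $S$.} Then $D_i,D_j\subseteq S$, and bipanchromaticity gives $|D_i|,|D_j|\ge 2$. All edges between $D_i$ and $D_j$ are present, since $S$ is a clique, so the task is to find two disjoint edge sets in the complete bipartite graph between $D_i$ and $D_j$. Write $D_i=\{x_1,\dots,x_a\}$ and $D_j=\{y_1,\dots,y_b\}$ with $a,b\ge2$.
\begin{itemize}
\item For $F_{ij}^i$: attach each $y_t$ with $t\ge 2$ to $x_1$, and attach $y_1$ to $x_2$.
\item For $F_{ij}^j$: attach each $x_s$ with $s\ge 2$ to $y_1$, and attach $x_1$ to $y_2$.
\end{itemize}
The edges of $F_{ij}^i$ are $x_1y_t$ $(t\ge2)$ and $x_2y_1$. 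The edges of $F_{ij}^j$ are $x_sy_1$ $(s\ge2)$ and $x_1y_2$. The shared candidates are $x_2y_1$ and $x_1y_2$, and both are assigned consistently, so the only thing to check is that no edge lands in both sets. This finite check is the fiddly part. If it fails, I would use a perfect-matching-plus-extra-edges choice instead, which works because $a,b\ge2$.

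\emph{Case 2: at least two distinct separators.} Take a clique tree $T$ whose edges are labelled by the intersections of their endpoint cliques. These labels are exactly the minimal separators. For a separator $S$, some other label exists and $T$ is connected, so some edge of $T$ labelled $S$ is adjacent in $T$ to an edge labelled by some $S'\neq S$. These two tree edges share a maximal clique $K$, and $S\cup S'\subseteq K$. Hence every vertex of $S$ is adjacent to every vertex of $S'$. Fix such a partner $S'=\sigma(S)$ for every separator $S$. The two edge sets are then:
\begin{itemize}
\item $F_{ij}^i$: each $y\in D_j\cap S$ is attached to some vertex of $D_i\cap\sigma(S)$, which is nonempty by panchromaticity.
\item $F_{ij}^j$: each $x\in D_i\cap S$ is attached to some vertex of $D_j\cap S$, which is also nonempty.
\end{itemize}
Every edge of $F_{ij}^j$ has both ends in a single separator. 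Every edge of $F_{ij}^i$ joins two distinct separators, and these are disjoint. Hence the two sets are edge-disjoint, and each covers the required class. In this case bipanchromaticity is not even needed.

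Combining the two cases, $\varphi$ is pairwise separated, and Theorem~\ref{thm01} yields $k$ CIST. The main obstacle is Case 1: there one needs $|D_i|,|D_j|\ge2$, which is precisely where bipanchromaticity enters. The choice of partner separators in Case 2 is the structural step that uses strict chordality; without disjointness of the separators, a vertex could belong to several separators and the within/between dichotomy guaranteeing edge-disjointness would break down.
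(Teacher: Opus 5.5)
Your strategy is the paper's: show that any bipanchromatic coloring of $H_S(G)$ is pairwise separated, then apply Theorem~\ref{thm01}. Your Case~2 is a correct and more precise version of the paper's remark that consecutive separators along a clique tree lie in a common maximal clique. You route $F_{ij}^j$ inside a single separator and $F_{ij}^i$ between a separator and a distinct partner separator. Because the separators are disjoint, these two kinds of edges cannot coincide, which is exactly what justifies the paper's claim that ``these edges can be chosen distinctly''. Treating the one-separator case separately is also right, since that is the only place bipanchromaticity is genuinely needed.

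The step that fails is your explicit construction in Case~1. Since ``$t\ge 2$'' includes $t=2$ and ``$s\ge 2$'' includes $s=2$, both $x_1y_2$ and $x_2y_1$ lie in $F_{ij}^i$ and in $F_{ij}^j$. So the two sets are never edge-disjoint, for any $a,b\ge 2$.

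Your fallback is the right repair and should be written out. Take the two perfect matchings of the $4$-cycle $x_1y_1x_2y_2$ and attach the remaining vertices by stars: $F_{ij}^i=\{x_1y_1,x_2y_2\}\cup\{x_1y_t : t\ge 3\}$ and $F_{ij}^j=\{x_1y_2,x_2y_1\}\cup\{x_sy_1 : s\ge 3\}$.
\begin{itemize}
\item All these edges exist because $S$ is a clique.
\item $F_{ij}^i$ covers every vertex of $D_j$, and $F_{ij}^j$ covers every vertex of $D_i$.
\item The two sets share no edge.
\end{itemize}
With this replacement your argument is complete.
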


\begin{proof}
Since the coloring is bipanchromatic, each minimal separator contains every color. Hence, for every pair of colors $i\neq j$, each vertex of $D_i$ is adjacent to a vertex of $D_j$, and vice versa. These edges in both directions can be chosen distinctly because along a clique tree, any two consecutive minimal separators are contained in a common maximal clique. Thus, the coloring is pairwise separated, and the result follows from Theorem~\ref{thm01}.
\end{proof}

\begin{corollary}
Let G be a connected strictly chordal graph and $H_S(G)$ its separator hypergraph such that $\mathcal{E}(H_S(G)) = \{ S_1,S_2, \dots, S_{\ell} \}$. If $\ell = 1$, then $G$ admits $\lfloor \frac{|S_1|}{2} \rfloor$ CIST. Otherwise, $G$ admits $S_{\min}$ CIST, with $S_{\min} = \min_{1 \leq i \leq \ell} |S_i|$.
\end{corollary}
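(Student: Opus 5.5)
The plan is to reduce everything to Corollary~\ref{coro1}. I will exhibit an explicit bipanchromatic coloring of $H_S(G)$ with the stated number of colors. The key structural fact is that $G$ is strictly chordal, so the hyperedges $S_1,\dots,S_\ell$ of $H_S(G)$ are pairwise disjoint. Any coloring can therefore be chosen independently on each separator; the only global constraint is that each color appear in every $S_i$ and at least twice overall.

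\emph{Case $\ell=1$.} Here $H_S(G)$ has the single hyperedge $S_1$ and vertex set $S_1$. Let $k=\lfloor |S_1|/2\rfloor$. Partition $S_1$ into $k$ classes, each of size at least $2$: use $k$ pairs, and if $|S_1|$ is odd put the leftover vertex into any class. Color each class with its own color. Then $S_1$ contains all $k$ colors, and each color appears at least twice, so the coloring is bipanchromatic. Corollary~\ref{coro1} then gives $k$ CIST. The degenerate case $|S_1|=1$ gives $k=0$ and is vacuous. For $|S_1|\ge 2$ we have $k\ge 1$, and the construction works; note that since $G$ is not complete a separator exists.

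\emph{Case $\ell\ge 2$.} Let $k=S_{\min}$. In each $S_i$, give $k$ of its vertices the colors $1,\dots,k$ bijectively, and color any remaining vertices of $S_i$ arbitrarily from $\{1,\dots,k\}$. Each $S_i$ then contains every color, so the coloring is panchromatic. Because the $S_i$ are disjoint and $\ell\ge 2$, every color appears in at least two distinct vertices (one in $S_1$ and one in $S_2$), so it is bipanchromatic. Corollary~\ref{coro1} yields $k$ CIST.

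The main obstacle is not the coloring, which is straightforward. It is trusting that Corollary~\ref{coro1} legitimately applies, i.e.\ that its pairwise-separated argument works. In the case $\ell=1$ there is no clique-tree path between distinct separators, so the two required edge sets between $D_i$ and $D_j$ must come from within $S_1$ itself. Since $S_1$ is a clique and $|D_i|,|D_j|\ge 2$, this is easy to check directly. Take $D_i\supseteq\{x,x'\}$ and $D_j\supseteq\{y,y'\}$, and set $F^i_{ij}=\{xy,xy'\}$ and $F^j_{ij}=\{x'y,x'y'\}$, each extended to all of $D_i,D_j$ as needed. I would verify this explicitly rather than rely only on the corollary's sketch. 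For $\ell\ge 2$ I would likewise check that the edges between consecutive separators in a common maximal clique supply the disjoint families.
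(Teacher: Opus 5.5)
Your proposal is correct and is essentially the paper's proof. Both build an explicit bipanchromatic coloring and then invoke Corollary~\ref{coro1}: pairs inside $S_1$ when $\ell=1$, and a full copy of the $S_{\min}$ colors in each of the pairwise disjoint separators when $\ell\ge 2$. Your write-up is slightly tighter than the paper's in two places. You color the leftover vertex when $|S_1|$ is odd, and you correctly take the second occurrence of each color from a different separator; the paper's claim of two occurrences ``in each hyperedge'' is false for $S_{\min}$ itself.

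There is one slip in your optional direct check for $\ell=1$. With $F^i_{ij}=\{xy,xy'\}$ and $F^j_{ij}=\{x'y,x'y'\}$, the vertex $x\in D_i$ is not covered by $F^j_{ij}$. Recentering does not help, because two stars centered at $h_i\in D_i$ and $h_j\in D_j$ always share the edge $h_ih_j$. Instead, split the $4$-cycle $x\,y\,x'\,y'$ into its two perfect matchings, $F^i_{ij}\supseteq\{xy,x'y'\}$ and $F^j_{ij}\supseteq\{xy',x'y\}$. Then attach every further $y''\in D_j$ by $xy''$ and every further $x''\in D_i$ by $x''y$; these added edges collide with nothing already chosen.
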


\begin{proof}
Suppose that $\ell = 1$. Let $\{ u_1,u_2, \dots, u_{|S_1|} \}$ be the vertices of $S_1$ and let $s = \lfloor \frac{S_1}{2} \rfloor$. Let $P = \{ p_1,p_2,\dots, p_s \}$ be a set of colors. For each $i \in \{ 1,2,\dots, s \}$, assign the color $i$ to the vertices $u_{2i}$ and $u_{2i-1}$. Each color of $P$ is present twice in the hyperedge; therefore, $H_S(G)$ admits a bipanchromatic $s$-coloring. From Corollary~\ref{coro1}, $G$ admits $s$ CIST.

Now suppose that $\ell \geq 2$. For each $i \in \{ 1,2,\dots, m \}$, let $\{ v_i^1,v_i^2, \dots, v_i^{|S_i|} \}$ be the set of vertices of $S_i$. Let $P = \{ p_1,p_2,\dots, p_{|S_{\min}|} \}$ be a set of colors. For each $i$, assign the color $p_j$ to the vertex $v_i^j$. Clearly, $S_{\min}$ contains all the colors of $P$. Then, assign arbitrary colors from $P$ to the remaining uncolored vertices of $H_S(G)$. Each color in $P$ appears at least twice in each hyperedge: consequently, $H_S(G)$ admits a bipanchromatic $|S_{\min}|$-coloring. From Corollary~\ref{coro1}, $G$ admits $|S_{\min}|$ CIST.
\end{proof}

The statement of Theorem~\ref{thm01} is very strict; indeed, the separator hypergraph $H_S(G)$ of a graph $G$ may not admit a pairwise separated bipanchromatic $k$-coloring, but rather a pairwise separated bipanchromatic $s$-coloring for some $s < k$. It can be hard to check whether such an $s$-coloring exists, but we can use the $k$-bipanchromatic coloring to construct the CIST, even if this coloring is not pairwise separated. To do that, let us introduce the following definition: Let $H$ be a hypergraph and $\phi$ a $k$-bipanchromatic coloring of $H$, and let $P = \{ p_1,p_2,\dots, p_k \}$ denote the set of colors used by $\phi$. A color $p_l \in P$ is said to be a {\em total color} if, for every $i \in \{ 1, \dots, k\} \backslash \{l\} $, there exist two edge-disjoint sets $F_{il}^l,F_{il}^i \subseteq \{ (x,y) | x \in D_l, y \in D_i \}$ such that every vertex of $D_l$ is incident with an edge of $F_{il}^i$, and every vertex of $D_i$ is incident with an edge of $F_{il}^l$. In other words, $D_l$ is pairwise separated from all other colors.

\begin{thm}
Let $G$ be a connected chordal graph. If $H_S(G)$ admits a bipanchromatic $k$-coloring such that $m$ colors are total colors, then $G$ admits $\lfloor \frac{k+m}{2} \rfloor$ CIST.
\label{thm02}
\end{thm}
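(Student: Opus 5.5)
The plan is to merge color classes so as to produce a new bipanchromatic coloring that is pairwise separated, and then invoke Theorem~\ref{thm01}. Let $\phi$ be the given bipanchromatic $k$-coloring with classes $D_1,\dots,D_k$. Reorder the colors so that $D_1,\dots,D_m$ are the total colors and $D_{m+1},\dots,D_k$ are the non-total ones. Keep each total class as it is. Group the $k-m$ non-total classes into $\lfloor \frac{k-m}{2}\rfloor$ disjoint pairs, and replace each pair $\{D_a,D_b\}$ by the union $D_a\cup D_b$. If $k-m$ is odd, one non-total class $D_r$ is left over; merge it into one of the new unions, or into a total class when $m\ge 1$. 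The number of resulting classes is $m+\lfloor \frac{k-m}{2}\rfloor=\lfloor \frac{k+m}{2}\rfloor$, which is the claimed bound.

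First I check that the new coloring is bipanchromatic for $H_S(G)$. Every hyperedge $S$ contains a vertex of every original color, so it meets every union. Each new class contains an original class, so it still has at least two vertices. By Lemma~\ref{lemme0}, every new class is a connected dominating set of $G$; alternatively, a union of two such sets is again connected and dominating.

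Next I verify pairwise separation for the merged coloring. For two total classes $D_l$ and $D_{l'}$, the definition of total color directly provides the edge-disjoint sets $F^{l}_{ll'}$ and $F^{l'}_{ll'}$. For a total class $D_l$ and a merged class $D'=D_a\cup D_b$, take $F^{D_l}=F^{l}_{al}\cup F^{l}_{bl}$, whose edges run from $D_l$ to $D_a\cup D_b$ and cover every vertex of $D'$. Take $F^{D'}=F^{a}_{al}$, which already covers every vertex of $D_l$. These two sets are edge-disjoint because $F^a_{al}\cap F^l_{al}=\emptyset$, and edges of $F^l_{bl}$ have an endpoint in $D_b$, so they never lie in $F^a_{al}$. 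If a leftover class was merged into a total class, the same reasoning goes through by treating the enlarged total class as a union.

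The main obstacle is a pair of two merged classes $D'=D_a\cup D_b$ and $D''=D_c\cup D_d$, since none of $a,b,c,d$ is total. Here I would exploit the freedom of the union. For each vertex of $D''$ I need an edge to $D'$, and for each vertex of $D'$ an edge to $D''$, with the two edge families disjoint. My first attempt is to orient the choices: attach $D''$ to $D'$ only through edges meeting $D_a$, and attach $D'$ to $D''$ only through edges meeting $D_c$ or $D_d$ from the $D_b$ side, so that the two families cannot share an edge. Whether such edges exist is exactly where the chordal structure has to be used. Each minimal separator contains all four original colors, and consecutive separators along a clique tree lie in a common maximal clique, so many cross edges are available, as in the proof of Corollary~\ref{coro1}. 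If this argument fails in general, I would instead choose the pairing of non-total classes carefully, for example pairing each non-total class with a partner that is separated from it. Alternatively, I would use Theorem~\ref{thm00} directly and build the trees with a greedy, edge-by-edge disjointness argument in place of Theorem~\ref{thm01}.

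Once pairwise separation holds, Theorem~\ref{thm01} yields $\lfloor \frac{k+m}{2}\rfloor$ CIST.
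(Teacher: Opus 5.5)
Your overall plan matches the paper's: keep the $m$ total classes, merge the non-total classes in pairs, check pairwise separation, and invoke Theorem~\ref{thm01}. Your total--total and total--merged cases are fine. However, the case you yourself flag as the main obstacle, two merged classes $D'=D_a\cup D_b$ and $D''=D_c\cup D_d$, is left unresolved, and this is a genuine gap.

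The orientation you sketch does not work. In $T_{D''}$, every vertex $x\in D_a$ must be attached by an edge $xy$ with $y\in D''$. Such an edge meets $D_a$, so it belongs to the same family of $D_a$--$D''$ edges you reserved for attaching $D''$ to $D'$ in $T_{D'}$. Nothing then prevents one edge from serving both as $y$'s attachment in $T_{D'}$ and as $x$'s attachment in $T_{D''}$. Restricting $T_{D''}$ to edges ``from the $D_b$ side'' simply leaves $D_a$ unattached. Your fallbacks (a careful pairing, a greedy construction via Theorem~\ref{thm00}) are not arguments. Appealing to the mechanism of Corollary~\ref{coro1} does not help either, since that corollary relies on strict chordality.

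The missing idea is a crossing assignment that uses only Lemma~\ref{lemme0}, applied to each \emph{original} class. Every $D_i$ is dominating and the classes are pairwise disjoint, so the required edges exist:
in $T_{D'}$, attach each vertex of $D_c$ to a neighbor in $D_a$ and each vertex of $D_d$ to a neighbor in $D_b$;
in $T_{D''}$, attach each vertex of $D_a$ to a neighbor in $D_d$ and each vertex of $D_b$ to a neighbor in $D_c$.
The first family consists only of $D_a$--$D_c$ and $D_b$--$D_d$ edges, and the second only of $D_a$--$D_d$ and $D_b$--$D_c$ edges. They are therefore automatically edge-disjoint. This is exactly the paper's choice of $F_{AB}^A$ and $F_{AB}^B$.

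Existence of these edges is not where further chordal structure enters; domination suffices. A leftover class $D_e$ merged into $D_a$ is handled the same way, by replacing $D_a$ with $D_a\cup D_e$. Once this step is supplied, the rest of your argument goes through.
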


\begin{proof}
Let $\varphi$ be a bipanchromatic $k$-coloring  of $H_S(G)$ with color classes $D_1,\dots,D_k$. Consider the set of color classes of total colors denoted by $D' = \{ D_1',\dots, D_m' \} \subset D $. According to the definition of total colors, the colors of $D'$ are mutually pairwise separated; moreover, each color of $D'$ is pairwise separated with all non-total colors. Therefore, according to Theorem~\ref{thm01}, $G$ admits $m$ CIST.

Furthermore, consider regrouping the classes of non-total colors into pairs, so that if only one color remains, it is arbitrarily assigned to another class. These $\lfloor\frac{k-m}{2} \rfloor$ new classes of colors are pairwise separated. Indeed, without loss of generality, consider four arbitrary non-total color classes $ D_1,D_2,D_3,D_4$ (or five classes $D_1$, $D_2$, $D_3$, $D_4$, and $D_5$, when only five classes remain, in which case $D_5$ is merged with the class $D_1$ (\textit{i.e.}, $D_1 := D_1 \cup D_5$)); those classes are regrouped into two classes $A = \{ D_1,D_2 \}$ and $B = \{ D_3,D_4 \}$. According to Lemma~\ref{lemme0}, $A$ and $B$ are connected dominating sets. Let 
\[
\left\{
\begin{aligned}
    F_{AB}^A &= \{ (x,y) \in E(G) | (x \in D_1 \wedge y \in D_4) \wedge (x \in D_2 \wedge y\in D_3) \}\\
    F_{AB}^B &= \{ (x,y) \in E(G) | (x \in D_1 \wedge y \in D_3) \wedge (x \in D_2 \wedge y\in D_4) \}
\end{aligned}
\right.
\]
Those two sets are clearly edge-disjoint and pairwise separated with all total colors; moreover, every vertex of $A$ is incident with an edge of $F_{AB}^A$, and every vertex of $B$ is incident with an edge of $F_{AB}^B$. It follows that $G$ admits $m + \lfloor \frac{k-m}{2} \rfloor = \lfloor \frac{k+m}{2} \rfloor$ CIST.
\end{proof}

The number of CIST given by Theorem~\ref{thm02} is not optimal. Figure~\ref{expl1} shows a chordal graph $G$ admitting four CIST. Its separator hypergraph $H_S(G)$ admits the bipanchromatic $4$-coloring displayed on the top right, which has no total colors. Indeed, the bipartite subgraphs between the green and orange classes and between the red and blue classes have edge sets $\{(a,d),(b,d),(d,f),(f,h)\}$ and $\{(c,e),(e,g),(g,i)\}$, respectively. These subgraphs contain only four edges on five vertices and three edges on four vertices, respectively, which is insufficient to obtain two edge-disjoint sets that cover the respective color classes. Hence, no color is total. Therefore, with $k=4$ and $m=0$, Theorem~\ref{thm02} guarantees only two CIST. However, the same hypergraph admits the pairwise-separated bipanchromatic $3$-coloring shown on the bottom right, which guarantees three CIST.
\begin{figure}[ht]
\centering
\begin{tikzpicture}[scale = 0.7]        

        \node[shape=circle,draw=black] (a) at (-5.5,2.5) {$a$};
        \node[shape=circle,draw=black] (b) at (-4,2.5) {$b$}; 
        \node[shape=circle,draw=black] (c) at (-2.5,2.5) {$c$};
        \node[shape=circle,draw=black] (d) at (-1,0) {$d$};
        \node[shape=circle,draw=black] (e) at (-2.5,-2.5) {$e$}; 
        \node[shape=circle,draw=black] (f) at (-4,-2.5) {$f$};
        \node[shape=circle,draw=black] (g) at (-5.5,-2.5) {$g$};
        \node[shape=circle,draw=black] (h) at (-4,-5) {$h$};
        \node[shape=circle,draw=black] (i) at (-5.5,-5) {$i$};    
        \node[shape=circle,draw=black] (j) at (-7,0.8) {$j$};
        \node[shape=circle,draw=black] (k) at (-7,-0.8) {$k$};
        \node[shape=circle,draw=black] (l) at (-7,-3.5) {$l$};    
        \node[shape=circle,draw=black] (m) at (-4.8,-7.5) {$m$};    
        \node[shape=circle,draw=black] (n) at (-2.5,-5) {$n$};    

\path (a) edge (b);
\draw (a) to[out=45,in=135] (c);
\path (a) edge (d);
\path (a) edge (e);
\path (a) edge (f);
\path (a) edge (g);
\path (a) edge (j);
\path (a) edge (k);

\path (b) edge (c);
\path (b) edge (d);
\path (b) edge (e);
\path (b) edge (f);
\path (b) edge (g);
\path (b) edge (j);
\path (b) edge (k);

\path (c) edge (d);
\path (c) edge (e);
\path (c) edge (f);
\path (c) edge (g);
\path (c) edge (h);
\path (c) edge (j);
\path (c) edge (k);
\draw (c) to[out=-25,in=25] (n);

\path (d) edge (e);
\path (d) edge (f);
\path (d) edge (g);
\draw (h) to[out=40,in=-110] (d);
\path (d) edge (j);
\path (d) edge (k);
\draw (m) to[out=0,in=-90] (d);
\path (d) edge (n);

\path (e) edge (f);
\draw (g) to[out=-40,in=-140] (e);
\path (e) edge (h);
\path (e) edge (j);
\path (e) edge (k);
\draw (m) to[out=45,in=-105] (e);
\path (e) edge (n);

\path (f) edge (g);
\path (f) edge (h);
\path (f) edge (i);
\path (f) edge (j);
\path (f) edge (k);
\path (f) edge (l);
\draw (f) to[out=-105,in=90] (m);
\path (f) edge (n);

\path (g) edge (h);
\path (g) edge (i);
\path (g) edge (j);
\path (g) edge (k);
\path (g) edge (l);
\draw (g) to[out=-75,in=90] (m);

\path (g) edge (n);

\path (h) edge (i);
\path (h) edge (l);
\path (h) edge (m);
\path (h) edge (n);

\path (i) edge (l);
\path (i) edge (m);

        \node[shape=circle,draw=black,fill=green] (a) at (1.5,0) {$a$};
        \node[shape=circle,draw=black,fill=green] (b) at (3,0) {$b$}; 
        \node[shape=circle,draw=black,fill=red] (c) at (4.5,0) {$c$};
        \node[shape=circle,draw=black,fill=orange] (d) at (6,0) {$d$};
        \node[shape=circle,draw=black,fill=cyan] (e) at (7.5,0) {$e$}; 
        \node[shape=circle,draw=black,fill=green] (f) at (9,0) {$f$};
        \node[shape=circle,draw=black,fill=red] (g) at (10.5,0) {$g$};
        \node[shape=circle,draw=black,fill = orange] (h) at (12,0) {$h$};
        \node[shape=circle,draw=black,fill=cyan] (i) at (13.5,0) {$i$};

    \draw[] (6,0) ellipse (5.4cm and 1.7cm);
    \draw[] (7.5,0) ellipse (3.6cm and 1.3cm);
    \draw[] (9,0) ellipse (3.8cm and 1.3cm);
    \draw[] (11.2,0) ellipse (3cm and 1.3cm);

        \node[shape=circle,draw=black,fill=green] (a1) at (1.5,-5) {$a$};
        \node[shape=circle,draw=black,fill=green] (b1) at (3,-5) {$b$}; 
        \node[shape=circle,draw=black,fill=red] (c1) at (4.5,-5) {$c$};
        \node[shape=circle,draw=black,fill=orange] (d1) at (6,-5) {$d$};
        \node[shape=circle,draw=black,fill=green] (e1) at (7.5,-5) {$e$}; 
        \node[shape=circle,draw=black,fill=green] (f1) at (9,-5) {$f$};
        \node[shape=circle,draw=black,fill=red] (g1) at (10.5,-5) {$g$};
        \node[shape=circle,draw=black,fill = orange] (h1) at (12,-5) {$h$};
        \node[shape=circle,draw=black,fill=green] (i1) at (13.5,-5) {$i$};

    \draw[] (6,-5) ellipse (5.4cm and 1.7cm);
    \draw[] (7.5,-5) ellipse (3.6cm and 1.3cm);
    \draw[] (9,-5) ellipse (3.8cm and 1.3cm);
    \draw[] (11.2,-5) ellipse (3cm and 1.3cm);

\end{tikzpicture}
\caption{A chordal graph $G$ such that $H_S(G)$ admits a bipanchromatic $4$-coloring (above) and also a pairwise separated bipanchromatic $3$-coloring (below)}
\label{expl1}
\end{figure}

\label{sec4}
\section{Concluding remarks}\label{sec5}
In this paper, we extended the hypergraph approach to CIST from split graphs to chordal graphs. We established a necessary condition for the existence of $k$ CIST, and then proved that a pairwise separated bipanchromatic $k$-coloring of the separator hypergraph guarantees the existence of $k$ CIST. For strictly chordal graphs, bipanchromaticity alone is sufficient and gives explicit lower bounds based on the sizes of the minimal separators. More generally, we obtained a lower bound of $\lfloor \frac{k+m}{2} \rfloor$ CIST from a bipanchromatic $k$-coloring with $m$ total colors, allowing colorings that are not pairwise separated to be used in the construction.

Moreover, the usefulness of bipanchromatic colorings in establishing existence conditions for CIST motivates the study of this coloring problem on its own. Further work could seek sharper bounds on $\chi_p^2(H)$ in terms of $\chi_p(H)$ and $\alpha(H)$, and identify classes of hypergraphs for which the pairing of vertices considered in the conjecture is optimal. Moreover, algorithmic questions could be investigated, including identifying hypergraph classes for which the bipanchromatic number can be computed in polynomial time and constructing approximation algorithms to find bipanchromatic colorings with many colors.

\bibliographystyle{abbrvnat}
\bibliography{reference.bib}

\end{document}